\newtheorem{thm}{Theorem}
\newtheorem{cor}{Corollary}
\begin{document}
\vspace*{0.35in}

% Title must be 250 characters or less.
% Please capitalize all terms in the title except conjunctions, prepositions, and articles.
\begin{flushleft}
{\Large
\textbf\newline{Modeling Joint Improvisation between Human and Virtual Players in the Mirror Game}
}
\newline
% Insert author names, affiliations and corresponding author email (do not include titles, positions, or degrees).
\\
Chao Zhai \textsuperscript{1},
Francesco Alderisio \textsuperscript{1},
Piotr S\l{}owi\'{n}ski \textsuperscript{2},
Krasimira Tsaneva-Atanasova \textsuperscript{2},
Mario di Bernardo \textsuperscript{1,3,*}
\\
\bigskip
\bf{1} Department of Engineering Mathematics, University of Bristol, BS8 1UB Bristol, United Kingdom \\
\bf{2} College of Engineering, Mathematics and Physical Sciences, University of Exeter, EX4 4QF Exeter, United Kingdom \\
\bf{3} Department of Electrical Engineering and Information Technology, University of Naples Federico II, 80125 Naples, Italy
\bigskip

% Insert additional author notes using the symbols described below. Insert symbol callouts after author names as necessary.
%
% Remove or comment out the author notes below if they aren't used.
%
% Primary Equal Contribution Note
%\Yinyang These authors contributed equally to this work.

% Additional Equal Contribution Note
% Also use this double-dagger symbol for special authorship notes, such as senior authorship.
%\ddag These authors also contributed equally to this work.

% Current address notes
%\textcurrency a Insert current address of first author with an address update
% \textcurrency b Insert current address of second author with an address update
% \textcurrency c Insert current address of third author with an address update

% Deceased author note
%\dag Deceased

% Group/Consortium Author Note
%\textpilcrow Membership list can be found in the Acknowledgments section.

% Use the asterisk to denote corresponding authorship and provide email address in note below.
* Email: m.dibernardo@bristol.ac.uk

\end{flushleft}
% Please keep the abstract below 300 words
\section*{Abstract}
Joint improvisation is observed to emerge spontaneously among humans performing joint action tasks, and has been associated with high levels of movement synchrony and enhanced sense of social bonding. 
Exploring the underlying cognitive and neural mechanisms behind the emergence of joint improvisation is an open research challenge.
This paper investigates the emergence of jointly improvised movements between two participants in the mirror game, a paradigmatic joint task example. A theoretical model based on observations and analysis of experimental data is proposed to capture the main features of their interaction. A set of experiments is  carried out to test and validate the model ability to reproduce the experimental observations. Then, the model is used to drive a computer avatar  able to improvise joint motion with a human participant in real time. Finally, a convergence analysis of the proposed model is  carried out to confirm its ability to reproduce the emergence of joint movement between the participants.
% Between 150 and 200 words
\section*{Author Summary}
The aim of this paper is to develop a mathematical framework to explain and capture the emergence of joint improvisation between two individuals playing the mirror game, and generate a model-driven virtual player able to interact and coordinate its motion with that of a human subject. We take the mirror game as a paradigmatic example of joint action. In so doing we propose a theoretical model of joint improvisation in the mirror game. We show that such a model is able to account for the onset of movement synchronization in joint-action tasks between two players and to guarantee that a model-driven computer avatar interacts successfully in real-time with a human player generating, for the first time in the literature, new jointly improvised movements.

\section{Introduction}
\label{sec:intro}
Human social interactions give rise to a variety of self-organized and emergent motor behaviors \cite{Kelso1997,Schmidt2008,Schmidt2014,pnas14}. A typical example is joint improvisation between two humans performing some task together,  as engaging in a conversation or performing social dancing~\cite{john12,saw01,noy15}.  Experimental results suggest that interpersonal synchrony in joint actions unconsciously fosters social rapport and promotes a sense of affinity between two individuals~\cite{lak11,wil09}.

To investigate the mechanisms behind the emergence of social interaction between two individuals, the Human Dynamic Clamp (HDC) paradigm has been recently proposed in~\cite{kel09,pnas14,kostrubiec15} where a model-driven avatar (or virtual player) replaces one of the two humans. In so doing, the features of the motion of the virtual player can be manipulated in order to understand whether and how the interaction with the human subject is affected.

Here we present a new model to explain and analyze the emergence of joint improvisation between two players. We focus on the mirror game, a paradigm of joint human interaction which was recently proposed in ~\cite{noy11,hart14}. Contrary to previous approaches where models were typically used to generate simple oscillatory motion or to reproduce the motion of a human subject \cite{pnas14}, we present a model able to capture the complex movements generated by human subjects playing the game and generate new motion.
Furthermore, we show that the model  can capture and reproduce the essential kinematic features of the movement of a reference human subject as encoded by the Individual Motor Signature recently introduced in ~\cite{piotr,piotr15}, opening the possibility of testing  {\em in-silico}  the interaction between different individuals in a number of different configurations.
We also use our model to design  an enhanced model-driven avatar (or virtual player) able not only to interact with a human subject but also to generate jointly improvised movement with him/her. 

The predictions of the model and its ability to drive a computer avatar in real-time are tested and validated via an exhaustive experimental investigation, accompanied by a mathematical analysis of its convergence.

\section{Methods}

\subsection{Mirror game}\label{sec:mg}

As a simple yet effective paradigm to study interpersonal coordination between two individuals we use the mirror game~\cite{noy11}. Specifically,  as shown in Fig~\ref{fig1}, two players facing each other are asked to coordinate the motion of two balls mounted on two respective parallel strings. The players can be asked to play in a leader-follower condition (LF), where one is instructed to follow the motion of the other, or in a joint improvisation condition (JI), where they are instructed to imitate each other, create synchronized and interesting motions and enjoy playing together, without any designation of leader and follower. 

\begin{figure}
 \includegraphics{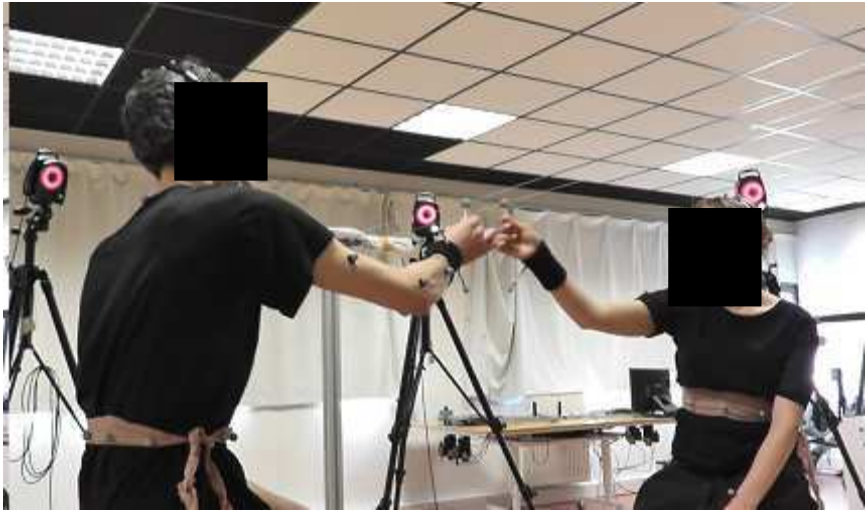}\centering
\caption{\label{fig1}{\bf Mirror game between two human players at University of Montpellier, France.} Two participants face each other and are asked to perform synchronized motion by moving two balls along a string to which they are respectively attached.}
\centering
\end{figure}

\subsection{Individual motor signature}

As suggested in \cite{piotr,piotr15}, the motion of each individual in the mirror game is characterized by different kinematic features that can be accounted for by examining the velocity profile of the player's motion during the game. The velocity profile comprises a velocity frequency distribution, termed in \cite{piotr} as Individual Motor Signature (IMS), and can be used to classify and distinguish the movement generated by different human subjects.
To acquire the IMS of a human subject, we asked him/her to play the mirror game in a Solo condition, i.e. in the absence of the other participant. In this condition, the human subject was asked to generate interesting complex motion for 60 seconds. %The position and velocity time series were acquired during the experiment so that the velocity PDF of the player's motion could be constructed and visualized.
The position time series were recorded during the experiment, and were next used to estimate the velocity PDF of the player's motion.
%See Fig~\ref{fig3} for an example of the typical time series and PDFs recorded during a JI trial.

\subsection{Experimental set-ups}
\label{sec:expSetup}

We use two experimental set-ups to carry out experiments. 
\begin{itemize}

\item Set-up 1 (shown in Fig~\ref{fig1}) consists of two parallel strings, with a ball that can slide on each. Two human players are asked to move their own ball back and forth along the strings, respectively, while seated. The position of the balls are detected by cameras disposed around the two participants. Details of the set-up can be found in \cite{piotr,piotr15}.

\item Set-up 2 (schematically shown in Fig~\ref{fig2}) employs a cheap leap motion controller  to detect the fingertip position of the human player (HP).
\begin{figure}
 \centering
 \includegraphics[width=0.8\textwidth]{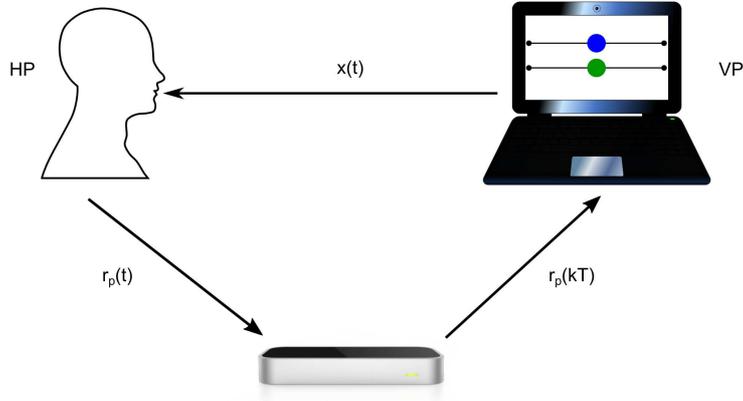}
\caption{\label{fig2}{\bf Experimental setup of the mirror game between a HP and a VP.} The position of the human fingertip $r_p(t)$ is detected by a leap motion controller, and the sampled position $r_p(kT)$ is sent to the computer, while the position $x(t)$ of the VP is generated by implementing the numerical algorithm of the single model. Two circles are shown on the computer screen, which correspond to the end effectors' position of the HP (blue circle) and the VP (green circle), respectively.}
\end{figure}
Both the leap motion controller and a laptop computer (employed to implement and run the theoretical model driving the computer avatar) are placed on a table. The HP is asked to wave his/her own index finger horizontally over the leap motion controller with a horizontal range of around $60cm$. The fingertip position of the HP is mapped into the interval $[-0.5,0.5]$ and visualized as a blue solid circle on the computer screen. A  green solid circle, whose movement is computed from the model presented in this paper, is also visualized to represent the position of the virtual player (VP).
The advantage of using this simple set-up, consisting of cheap off-the-shelf elements, is its accessibility and ease of implementation.
\end{itemize}

\subsection{Data and evaluation metrics}

To assess the level of coordination between the players we use the following metrics.
  
\begin{itemize}
  \item {\it Temporal correspondence.} The root mean square (RMS) of the normalized position error between the two players defined as  
  $$
e_p=\frac{1}{L}\sqrt{\frac{1}{n}\sum_{k=1}^{n}(x_{1,k}-x_{2,k})^2}
  $$
 is used to describe their tracking accuracy (temporal correspondence) during the game.  Here  $L$ refers to the range of admissible positions (e.g. the length of the strings in the set-up shown in Fig~\ref{fig1} or the range of motion detected by the leap motion controller), $n$ is the number of sampling steps in the simulation, and $x_{1,k}$ and $x_{2,k}$ denote the positions of two participants at the $k$-th sampling step, respectively.
  
  \item {\it Distance between IMSs.} The earth mover's distance (EMD) is used to quantify differences between the velocity PDFs of the two players' motion (e.g. to assess how similar/dissimilar their IMSs are). It is a proper metric in the space of PDFs~\cite{levi01,piotr15} and is computed as follows:
  $$
\eta(p_1,p_2)=\int_{Z}|CDF_{p_1}(z)-CDF_{p_2}(z)|dz
  $$
 where $Z$ denotes the integration domain, and $CDF_{p_i}(z)$ denotes the cumulative distribution function of the distribution $p_i,i\in\{1,2\}$. Furthermore, we normalize the EMDs with the maximal $\eta_{max}$ given by the length of the integration domain $\eta_{max} = |Z|$.
  
\item {\it Relative phase distribution.} The PDF of the relative phase $\phi_{12}$ between the players motion (estimated by means of wavelet coherence~\cite{Grinsted2004}) is used to check the directionality of the interaction during the game (i.e. if one player is leading or following the other).
\end{itemize}

\subsection{Human benchmark} \label{sec:HP_HP}
To establish a benchmark dataset to compare with model predictions, we obtained data from 8 different human dyads. Data from each dyad contains 3 solo trials for each human participant and 3 joint trials between them in JI condition. Description of all the available data can be found in Section \ref{sec:availabledata}.

A representative example of the data collected in mirror game between two HPs (Dyad 1, HPs JI trial 3) is shown in Fig~\ref{fig3}.
\begin{figure}
\centering
\includegraphics{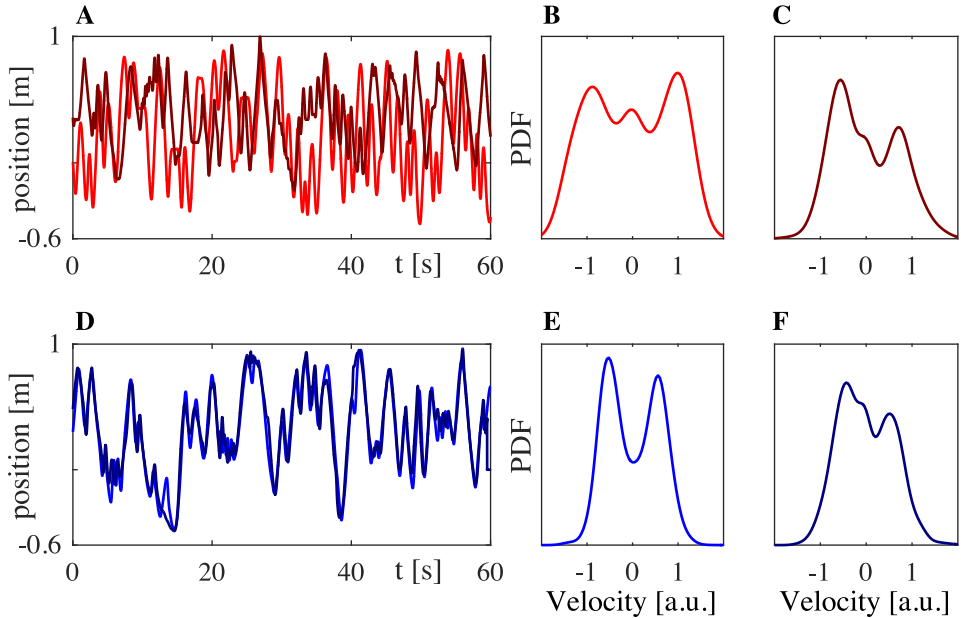}
\caption{\label{fig3}{\bf Example of position time series and velocity PDFs from experimental results for Dyad 1.} A: position time series for the solo trials of the HPs. B-C: PDFs of velocity corresponding to the two players. D: position time-series of the HPs from the JI trial. E-F: PDFs of velocity of the two HPs in JI condition.}
\end{figure}
In particular, Fig~\ref{fig3}A shows the trajectories of the first solo trial of  HP$_1$ (light red) and the third solo trial of HP$_2$ (dark red), while Fig~\ref{fig3}D shows the trajectories of the two HPs interacting in JI condition (light and dark blue). Additionally, we show the velocity PDFs estimated from each of the respective position time series: panels B-C from Solo and panels E-F from JI between the two HPs.

%In order to visualize the distances between the distributions into a geometric space, we used multidimensional scaling (MDS), a data mining and visualization technique~\cite{MDS,piotr15}.
In order to visualize the relations between the distributions, we used multidimensional scaling (MDS), a data mining and visualization technique~\cite{MDS,piotr15} that uses distances/ dissimilarities between objects to represent them as points in a geometric space.%; more information about MDS can be found in \ref{S1_Text}.

An example of such a geometric representation of the relations between the PDFs of the players' velocities is shown in Fig~\ref{fig4}A. Different velocity PDFs are denoted with different markers: $\sigma_i$ (red dots) indicates the motor signature of the $i$-th human player when playing solo; $\mu_{i}$ (blue dots) indicates the velocity PDFs of the $i$-th human player during runs of the mirror game with the other player.

\begin{figure}
\centering
\includegraphics{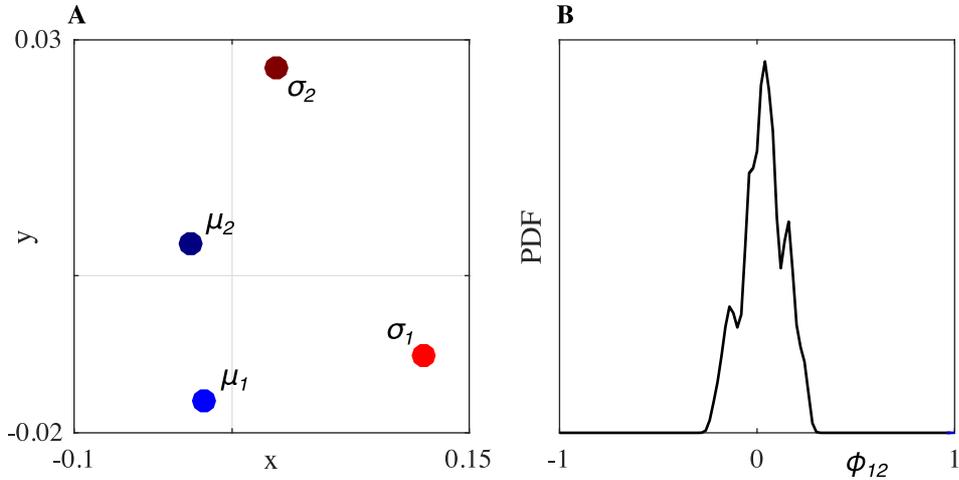}
\caption{\label{fig4}{\bf Example of data analysis for Dyad 1.} A: visualization of the relations between kinematics of two human players obtained by means of MDS. Red dots $\sigma_{1,2}$ indicate the signatures of the two respective HPs. Blue dots labelled as $\mu_{1,2}$ indicate the velocity profile of the motion of the HPs in the JI trial. B: PDF of relative phase $\phi_{12}$ between the two HPs.}
\end{figure}

In agreement with previous studies \cite{hart14}, we found that the velocity PDFs of the two HPs (Fig~\ref{fig4}A, light and dark blue dots) move away from their respective motor signatures (Fig~\ref{fig4}A, light and dark red dots) when they are improvising together because of mutual imitation, adaptation and synchronization that results in their velocity PDFs moving towards each other during the game. The values of the distances between the velocity PDFs depicted in Fig~\ref{fig4}A were computed to be $\eta(\sigma_1,\mu_{1})=0.102$, $\eta(\sigma_2,\mu_{2})=0.052$ and $\eta(\mu_{1},\mu_{2})=0.030$.

Finally, in Fig~\ref{fig4}B, we plot the distribution of the  relative phase between the two players. We found that it is quite broad and centered around 0, indicating that neither of the two players was clearly leading the interaction during the game. The players, as instructed, were generating jointly improvised motion for most of the time.

Similar results were obtained for all the trials of every dyad (Fig~\ref{fig5}). Each panel of Fig~\ref{fig5} corresponds to a single dyad. For each dyad, three PDFs of relative phase from the three respective JI trials are shown with different scales of blue. It is possible to appreciate how all the PDFs are quite broad and centered around 0. The only exception is Dyad 7 where the maximum of the relative phase PDF is shifted on the right for all the three trials indicating that, despite the instruction given to the two participants, one player consistently ended up leading the game (Fig~\ref{fig5}G).

\begin{figure}
\includegraphics[width=\textwidth]{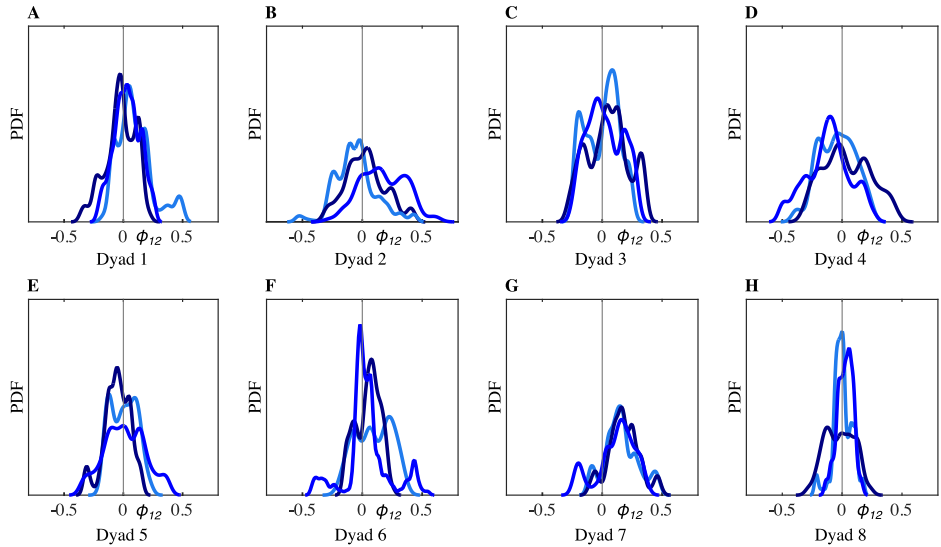}\centering
\caption{\label{fig5}{\bf PDFs of relative phase between HPs in all the dyads.} Each panel corresponds to a single dyad. For each dyad, three HP$_1$-HP$_2$ trials are shown with different scales of blue. The relative phase $\phi_{12}$ between players is estimated by means of wavelet coherence (with 1Hz cut-off frequency). PDFs are estimated from histograms with a kernel density estimation.}
\end{figure}

\section{Results} \label{sec:results}

\subsection{Theoretical model of joint improvisation} \label{sec:hypo}

Our first result is a mathematical model of Joint Improvisation. Our experimental observations of two HPs playing the mirror game suggests that their interaction in a JI condition is driven by three key factors: (i) their will  to synchronize each other's movement; (ii) the tendency of each player to exhibit some individual preferred movement features (or IMS); and (iii) the attempt each player makes to imitate the way the other moves (or mutual imitation). As shown in Fig~\ref{fig6}, we proposed to map each of these three factors onto a specific behavioral goal. 
In particular, {\it synchronization of joint movements} can be translated into the goal of minimizing the position mismatch between the balls moved by the two participants, which is related to the temporal correspondence (TC) between their positions.  {\it Spontaneous motion} preferences arise from the tendency of each participant to move according to his/her own IMS. Finally, {\it mutual imitation} can be  achieved by the participants  minimizing their velocity mismatch (velocity TC) during the mirror game. We captured each of these properties into a new mathematical model of interaction during JI formulated as the following optimal control problem.

\begin{figure}
\includegraphics[width=.95\textwidth]{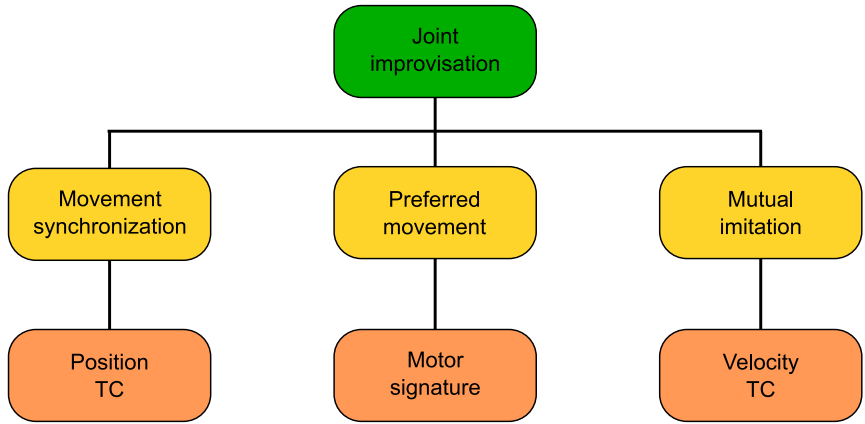}\centering
\caption{\label{fig6}{\bf Theoretical model of joint improvisation. } Three key factors in the emergence of JI (from left to right): 1. movement synchronization, corresponding to temporal correspondence (TC) between positions of the players' end effectors; 2. preferred movement, captured by their respective IMS; 3. mutual imitation, modeled by temporal correspondence between their velocities.}
\centering
\end{figure}

Specifically, following the approach of~\cite{chaosmc, chaocdc14, chao14}, we modeled the motion of each of the players using a nonlinear Haken-Kelso-Bunz (HKB) oscillator~\cite{hkb85} of the form
\begin{equation}\label{vplayer}
\ddot{x_i}+(\alpha_i \dot{x_i}^2+\beta_i x_i^2-\gamma_i)\dot{x_i}+\omega_i^2x_i=u_i,\qquad i=1,2
\end{equation}
where $x_i$ and $\dot{x}_i$ denote position and velocity of player $i$, $u_i$ is the coupling function through which player $i$ modulates its motion according to that of the other player, while $\alpha_i$, $\beta_i$, $\gamma_i$ and $\omega_i$ are intrinsic parameters determining the intrinsic properties of the player's motion, such as speed of reaction and settling time. 
We represented the coupling function $u_i$ as a nonliner control input that each player computes by minimizing the following cost function over each sampling period $T=t_{k+1}-t_k$ the whole trial duration is being split into. Namely, the problem is that of finding the inputs $u_i$ such that
\begin{equation}\label{minj}
\min_{u_i \in \mathbb{R}} J_i \left( x_1,x_2,\dot{x}_1,\dot{x}_2,t \right)
\end{equation}
where
\begin{equation}\label{vcost}
\begin{split}
J_i \left( x_1,x_2,\dot{x}_1,\dot{x}_2,t \right) &=\frac{\theta_{p,i}}{2}\underbrace{(x_1(t_{k+1})-x_2(t_{k+1}))^2}_{Position~ TC}+\frac{\theta_{\sigma,i}}{2}\int_{t_k}^{t_{k+1}}\underbrace{(\dot{x}_i(\tau)-{\sigma_i}(\tau))^2}_{Motor ~Signature}d\tau\\
&+\frac{\theta_{v,i}}{2}\int_{t_k}^{t_{k+1}}\underbrace{(\dot{x}_1(\tau)-\dot{x}_2(\tau))^2}_{Velocity~TC}d\tau
+\frac{\eta_i}{2}\int_{t_k}^{t_{k+1}}u_i(\tau)^2d\tau
\end{split}
\end{equation}
with $\theta_{p,i},\theta_{\sigma,i},\theta_{v,i},\eta_i>0$ being tunable control parameters satisfying the constraint $\theta_{p,i}+\theta_{\sigma,i}+\theta_{v,i}=1$. 
Here, $\sigma_i$ encodes the IMS of player $i$ as his/her velocity time series during solo trials. 
The cost function described in Eq~(\ref{vcost}) contains four terms. The first three terms correspond to each of the three factors characterizing JI shown in Fig~\ref{fig6}, while the fourth aims at minimizing the control effort over each sampling period. 

%Specifically:
%\begin{enumerate}
%\item the first term describes the position TC between HP and VP, whose minimization aims at synchronizing the joint movement;
%\item minimization of the second term makes the velocity profile of the VP match the pre-recorded velocity profile of a HP (represented by $r_{\sigma}$) encoding the preferred movement features we wish the VP to exhibit;
%\item the third term accounts for the velocity TC, whose minimization results in the mutual imitation of two players;
%\item the fourth term imposes restrictions on the control energy ensuring that the control effort remains bounded.
%\end{enumerate}
The tunable weights in the cost function can be used to change the balance between the four terms described above. The optimal control framework  allows to incorporate into the same cost function all key factors identified to govern the emergence of joint improvisation. In what follows we will refer to each of the players modelled by Eq from (\ref{vplayer}) to (\ref{vcost}) as a \emph{virtual player}. We will denote VPs as VP$_i$ where $i$ denotes that the model receives as an input the pre-recorded velocity profile $\sigma_i$ of the $i$-th HP playing solo. 

\subsection{Model testing and validation}
\label{sec:modtestval}
To test the effectiveness of the model we compared its predictions with the human benchmark described in Section \ref{sec:HP_HP}. A total of 9 different interactions in each of the 8 dyads were considered, since there were 3 solo trials available for each participant (corresponding to the reference velocity profiles $\sigma_i$ used in the model). Indeed, if we refer to the IMS of the $i$-th HP recorded in the $j$-th Solo trial of each dyad as $\sigma_{i,j}$, the 9 different interactions were obtained by feeding VP$_1$ with $\sigma_{1,h}$ and VP$_2$ with $\sigma_{2,k}$, where $h,k=1,2,3$ (Fig~\ref{fig7}). A table with the composition of all dyads is shown in Table \ref{tab:VPdyads} in Section \ref{S1_Text}.

\begin{figure}
\includegraphics[width=.95\textwidth]{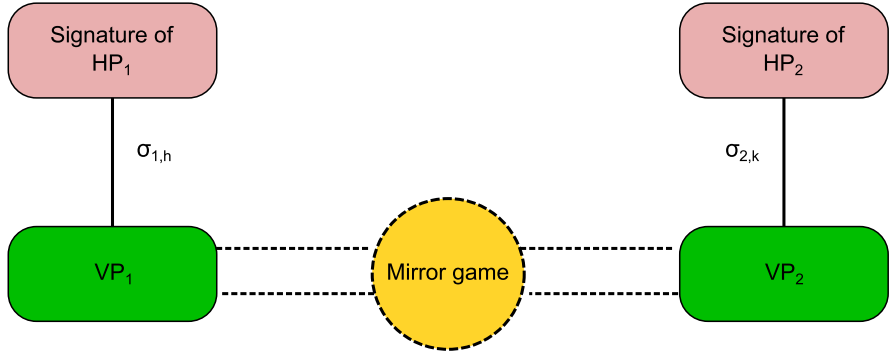}\centering
\caption{\label{fig7}{\bf Schematic diagram of VP-VP interaction.} VP$_1$ is fed with the motor signature of HP$_1$, while VP$_2$ with that of HP$_2$. We refer to the IMS of the $i$-th HP recorded in the $j$-th Solo trial of each dyad as $\sigma_{i,j}$. In this case $h,k=1,2,3$ give rise to 9 different combinations for each dyad. The JI session played by the virtual players resembles the one performed by the two HPs, whose respective motor signatures are fed to the VPs.}
\centering
\end{figure}

In so doing, for each dyad, each model equation was used to describe the kinematic behavior of a corresponding HP. The parameters of the model were set heuristically to the following values: $\alpha_1=\alpha_2=1$, $\beta_1=\beta_2=1$, $\gamma_1=\gamma_2=1$, $\omega_1=\omega_2=1$, $T=t_{k+1}-t_k=0.016$s. The weights $\theta_{p,i}$, $\theta_{\sigma,i}$ and $\theta_{v,i}$ were also set heuristically by trial-and-error in order to best match the experimental results (see Table \ref{tab:parset} in Section \ref{S1_Text} for further details on the values of the weights and how to interpret them).

\paragraph{Single JI trial comparison}
For the sake of clarity, we begin by showing a quantitative comparison of experimental data from a single HP-HP dyad with the corresponding data obtained by simulation of the model equations. In particular, we considered the third JI trial of Dyad 1. We denote by $\nu_i$ the velocity PDF of VP$_i$  and by $\mu_i$ the velocity PDF of its corresponding HP$_i$ from a simulated and experimental JI trial, respectively.

The values of the EMDs between velocity PDFs are given in Table~\ref{table1}. We found that the distance $\eta(\sigma_i,\nu_i)$ between the velocity PDFs of each virtual player (evaluated from the JI trial) and its reference motor signature $\sigma_i$ matches closely the distance $\eta(\sigma_i,\mu_i)$ between the corresponding HPs and their own signatures. Moreover, the distance $\eta(\nu_i,\nu_j)$ between  two VPs interacting with each other is quite close to that observed when the two HPs they model interact together in the mirror game, $\eta(\mu_i,\mu_j)$. This shows how, just like in the case of two humans (previously analyzed in Section \ref{sec:HP_HP}), the velocity PDFs of the two VPs move away from their respective signatures and get close to each other (while remaining close to the velocity PDFs of their human counterparts in the JI trial).

\begin{table}[b!]
\begin{center}
\caption{{\bf Evaluation of the model via EMDs between velocity PDFs for a single trial.} Here $\mu_i$ denotes the velocity profile of HP$_i$ during an experimental interaction with the other human player, $\nu_i$ that of the corresponding VP$_i$ playing with another VP {\em in-silico}, and $\sigma_i$ are the pre-recorded IMSs of the HPs.}\label{table1}
\begin{tabular}{|c|c|c|c|}
  \hline
  $\eta(\sigma_1,\mu_1$) & 0.102 & $\eta(\sigma_2,\mu_2$) & 0.052    \\ \hline
  $\eta(\sigma_1,\nu_1$) & 0.142 & $\eta(\sigma_2,\nu_2$) & 0.067    \\ \hline
  $\eta(\mu_1,\mu_2$)  & 0.030 & $\eta(\nu_1,\mu_1$) & 0.052     \\ \hline
  $\eta(\nu_1,\nu_2$)  & 0.021 & $\eta(\nu_2,\mu_2$) & 0.019    \\ \hline
\end{tabular}
\end{center}
\end{table}

Computation of the relative phases PDFs in HP-HP interactions and VP-VP interactions confirmed that they are close to each other, thus leading to the conclusion that, just like in the human scenario, neither of the two VPs turned out to be a leader during the JI trial. Indeed, if we denote with $\phi_{VP}$ and $\phi_{HP}$ the PDFs of the relative phase between the two VPs and the two HPs they model, respectively, the EMD between them was computed to be $\eta(\phi_{VP},\phi_{HP})=0.024$.

The previous findings show how the proposed model succeeds in capturing the main characteristics of the interaction between two human players improvising together, thus demonstrating its ability to reproduce {\em in-silico} the mirror game between two human subjects in a JI condition.
 
\paragraph{Matching results for the 8 dyads}
Next, we present results  for all the 8 experimental data-sets and show how our model is able to capture the experimental observations in terms of RMS position error, EMD between velocity PDFs and EMD between relative phase PDFs.

Figure~\ref{fig8} shows good agreement between RMS position errors observed in experiments (blue crosses, three trials) and those obtained from corresponding VP dyads (green boxes, nine trials). In particular, it is worth pointing out how the higher level of  synchronization between HPs in Dyad 5 (lower value of the RMS position error) was also captured in the VP-VP simulations.
 
\begin{figure}
\centering
\includegraphics{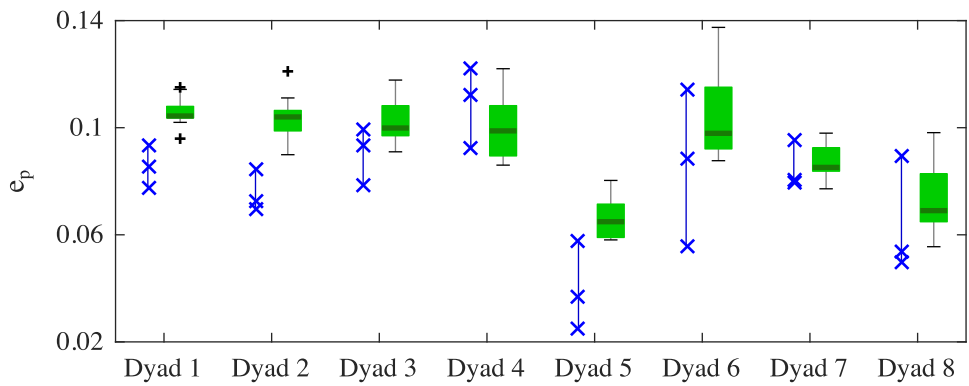}
\caption{\label{fig8}{\bf Matching in terms of temporal correspondence between HPs and between VPs.} Blue crosses (x) show the RMS position error from three JI trials of HPs, whilst box-plots depict the distributions of RMS position error from nine simulations of JI interaction between VPs (corresponding to nine combinations of the HPs' individual signatures). In particular: thick horizontal green lines indicate median of the distribution; central light green boxes show central 50\% of the data with lower and upper boundary lines being at the 25\% and 75\% quantiles of the data; two vertical whiskers estimate 99\% of the range of the data; black crosses (+) show outliers.}
\end{figure}

Figure~\ref{fig9} confirms the model ability to capture the behavioral plasticity of the IMS of the players during a typical game. Data from JI trials between HPs is shown in blue, while data corresponding to the VPs simulations is shown in green. We found that the EMD between velocity PDFs of two HPs was similar to that of the two corresponding VPs for all the dyads (Fig~\ref{fig9}A). Moreover, such similarity was found also in the relations between each player's motion and their signatures (Figs~\ref{fig9}B,C). It is worth pointing out how the higher value of $EMD$ between HP$_1$ and his/her corresponding signature for Dyad 7 and Dyad 8 was well captured by the model predictions (Fig~\ref{fig9}B).

\begin{figure}
\centering
\includegraphics{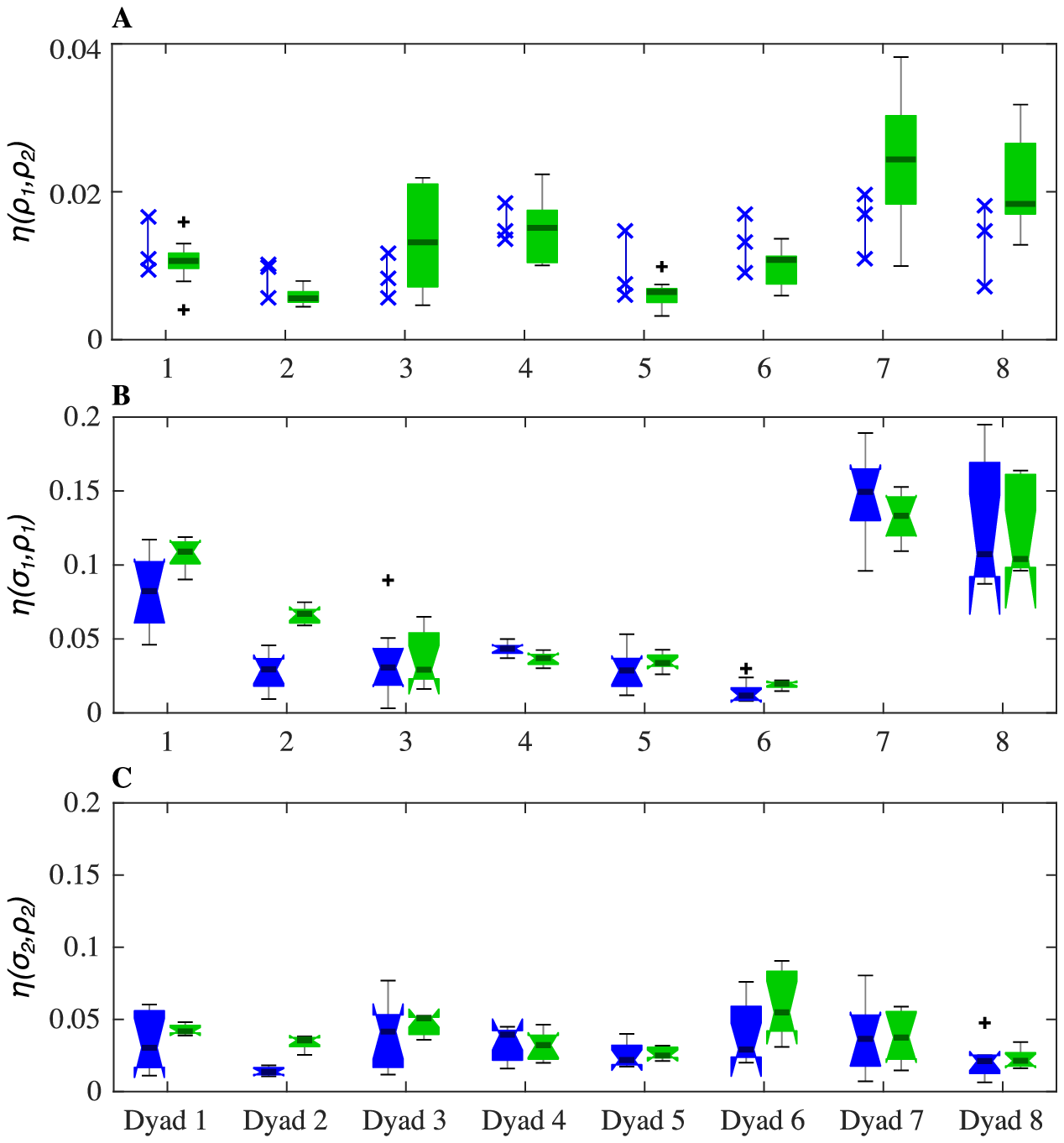}
\caption{\label{fig9}{\bf Matching in terms of relations between kinematics of HPs and VPs.} $\rho_i$ denotes the velocity PDF of the $i$-th player from a JI trial, that is $\mu_i$ for HP$_i$ and $\nu_i$ for VP$_i$.  A: degree of similarity between PDFs of velocities recorded during JI trials. Blue crosses show 3 trials for HPs and the green box-plots depict distributions of EMDs between velocities from 9 JI trials between VPs. B and C show how far the movements of the players in JI condition were shifted away from their motor signatures (blue for HPs, green for VPs). Notches on the box-plots indicate confidence intervals of the medians. Two medians are significantly different at the $p=0.05$ level if their notches do not overlap. Notches extending beyond the box indicate that the confidence intervals extend beyond central 50\% of the data points.}
\end{figure}

Figure~\ref{fig10} shows a quantitative comparison of the  PDFs of the relative phase computed from trials between HPs and VPs dyads. It is possible to appreciate how the EMD between relative phases in VP-VP trials, $\phi_{VP}$, and  HP-HP trials, $\phi_{HP}$, is low for all the eight dyads. This means that the absence of an emerging leader observed in a JI session between humans was also replicated in simulations between corresponding virtual players.

\begin{figure}
\centering
\includegraphics{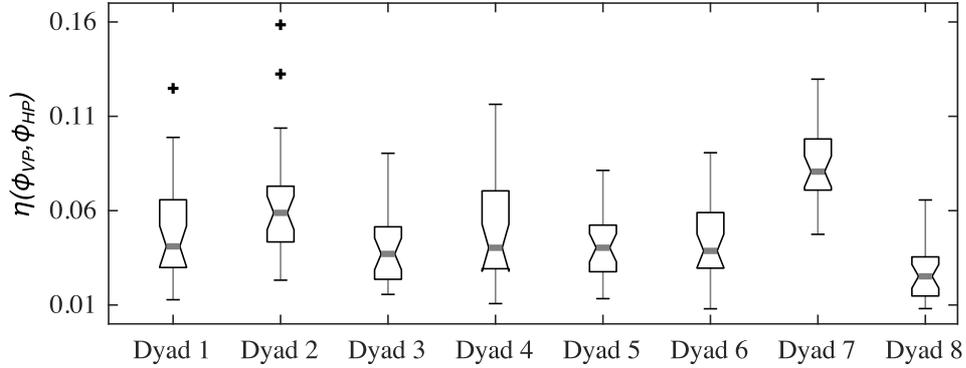}
\caption{\label{fig10}{\bf Matching in terms of relative phase between HPs and between VPs.} Box-plots illustrate distributions of EMDs between PDFs of relative phase from JI trials of HPs and VPs. Each box-plot corresponds to a single dyad and is constructed from 27 EMDs between three HP-HP relative phase PDFs and nine for the coupled VPs; black crosses (+) show outliers.}
\end{figure}

\subsection{A model-driven avatar} \label{sec:VP_HP}

Next, we used our theoretical model to drive a computer avatar able to play the mirror game with a human player in a JI condition. We employed the low-cost experimental Set-up 2 described in Section \ref{sec:expSetup} where the human player moves one of the two solid circles on the screen via a leap motion controller while the other is moved by the computer avatar.
 
The avatar computes the position of the circle it is moving by solving just one of Eq~(\ref{vplayer}), say for $i=1$, with $u_1$ obtained by solving the optimal control problem described in Eq~(\ref{vcost}). Now, $x_2$ and $\dot{x}_2$ indicate position and velocity of the HP (whose signature is denoted with $\sigma_{HP}$) interacting with the VP. In particular, velocity and position of such human player are estimated over each interval according to
\begin{equation*}\label{rvest}
\dot{x}_2(t)=\frac{x_2(t_k)-x_2(t_{k-1})}{T} \quad t\in[t_k, t_{k+1}]
\end{equation*}
and
\begin{equation*}\label{rpest}
x_2(t)=x_2(t_k)+\dot{x}_2(t) \left( t-t_k \right), \quad t\in[t_k, t_{k+1}]
\end{equation*}
Moreover, $\sigma_1$ in Eq~(\ref{vcost}) indicates the IMS of a different HP (which is fed to the VP) and is denoted with $\sigma_{VP}$. 

All the other model parameters were selected heuristically as follows: $\alpha_1=1$, $\beta_1=1$, $\gamma_1=1$, $\omega_1=1$, $\eta_1=10^{-4}$, $T=t_{k+1}-t_k=0.04$s, $\theta_{p,1}=0.2$, $\theta_{\sigma,1}=0.4$ and $\theta_{v,1}=0.4$. The initial position and velocity of the avatar were set to $0$.

The position time series recorded in the experiment for both players are shown in Fig~\ref{fig11}A. We found that the two main features of JI observed in the case of HP-HP interaction were replicated when replacing one of the two HPs with a VP. Indeed:
\begin{itemize}
\item both HP and VP move away from their own signatures and converge towards each other (Fig~\ref{fig11}B). In particular, $\eta(\nu,\sigma_{VP})=0.048$, $\eta(\mu,\sigma_{HP})=0.074$ and $\eta(\mu,\nu)=0.042$,
with $\mu$ and $\nu$ being the velocity PDFs obtained from the JI interaction between HP and VP, respectively;
\item the wide PDF of the relative phase between HP and VP indicates that there is no effective leader during the interaction (Fig~\ref{fig11}C).
\end{itemize}

\begin{figure}
\centering
\includegraphics{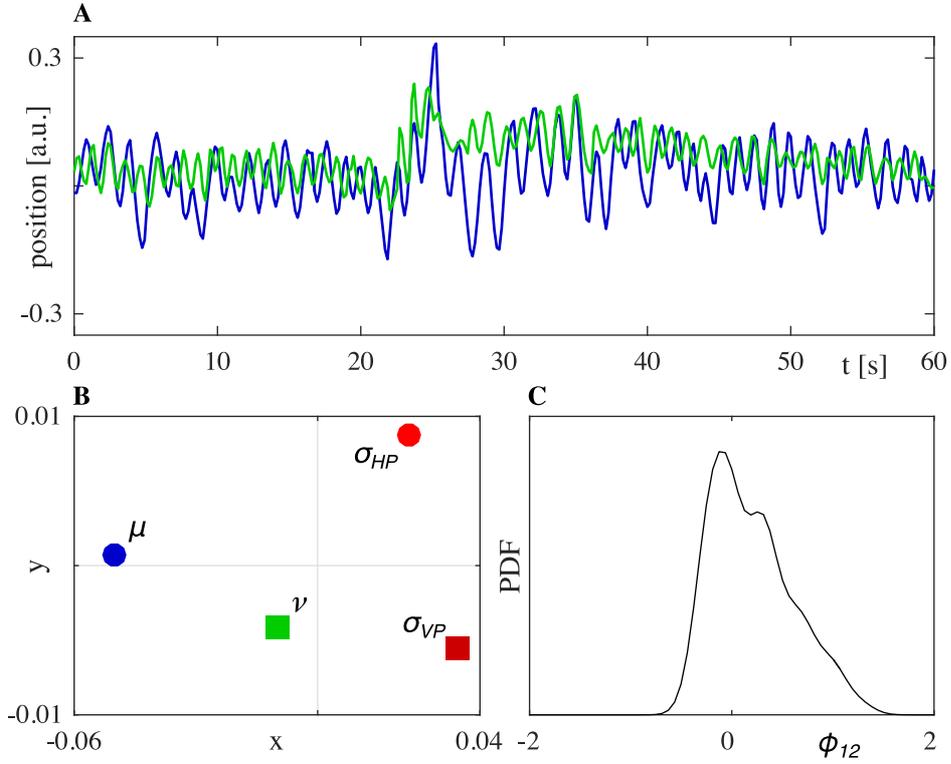}
\caption{\label{fig11}{\bf Interaction between a VP and a HP.} A: positions of HP (blue) and VP (green). B: relations between kinematics of motion of the players visualized by means of MDS. C: PDF of the relative phase between the two time-series of A.}
\end{figure}

Both results confirm that a computer avatar driven by our theoretical model is able to jointly improvise its motion in real-time with a human subject providing a new powerful tool for discovery and investigation of social interaction and movement coordination in the mirror game.

\subsection{Convergence analysis}
Finally, we confirmed via a theoretical analysis that the model we propose guarantees convergence between the players when either two coupled VPs are considered or when the model-driven avatar interacts with a human subject. Our main stability results can be listed as follows (see Section \ref{S1_Appendix} for a proof of the findings and further details):

\begin{itemize}
\item the solution to the minimization problem described in equations from~(\ref{vplayer}) to~(\ref{vcost}) ensures bounded position error between two VPs when {\em in-silico} experiments are considered;
\item the solution to the minimization problem described in equations from~(\ref{vplayer}) to~(\ref{vcost}) ensures bounded position error between HP and VP when the model-driven avatar interacts with a human subject;
\item if the nonlinear HKB dynamics of the VP end-effector described in Eq~(\ref{vplayer}) is substituted with a simpler linear model, achievement of the optimal solution to the minimization problem described in Eq~(\ref{minj}) and Eq~(\ref{vcost}) is guaranteed over each subinterval.
\end{itemize}

\section{Discussion}

In this paper we presented a new theoretical model of joint improvisation and highlighted the three key factors enhancing its emergence in joint action tasks within human dyads: movement synchronization, individual motor signature and mutual imitation. We then turned these ingredients into a mathematical model, based on the use of an HKB oscillator and an optimal control theoretic framework, to explain and reproduce the emergence of joint improvisation in human dyads.

Using both model predictions and experiments we demonstrated the applicability of our modelling approach to capture the emergence of joint improvisation between two human players in the mirror game and its capability to drive a computer avatar to produce jointly improvised movements with a human player.   

The availability of such an enhanced model-driven avatar provides a new fundamental tool to explore the important tenet in Social Psychology that behavioral similarity between people facilitates their interaction~\cite{Folkes1982,Marsh2009}. In particular, as recently proposed in \cite{piotr15},  the  similarity or dissimilarity between the IMSs of two individuals playing the mirror game can be an important factor affecting the level of their mutual interaction and coordination. Our proposed model can be used to test this hypothesis  both {\em in-silico} and via real-time experiments.

%Social interaction in the mirror game provides a venue for both participants to adapt to each other and thus facilitates the synchronization of joint movements. In our model the IMS serves as the source of spontaneous movements: human participants possessing similar IMSs tend to better coordinate with each other as recently observed in~\cite{piotr15}. Also, similarity of individual signatures could partially explain why experienced participants or experts are likely to improvise together and trigger resonance~\cite{noy11}.
Future work will include looking into the joint improvisation among multiple human participants~\cite{fran15}. Finally we wish to highlight that the work presented in this paper opens the exciting possibility of performing {\em in-silico} experiments to assess how two human players (whose IMS have been recorded during solo trials) would interact when playing the mirror game in a JI condition. This can be useful for rehabilitation purposes such as those being explored as part of the EU project \emph{AlterEgo}~\cite{alter}.

\section{Supporting Information}

\subsection{Supporting Text}
\label{S1_Text}

In this section we show the parameter setting of the coupling weights for all the VP dyads (see Section \ref{sec:hypo}) and we describe all the available data.

\subsubsection{Parameter setting of the coupling weights for all the dyads}

\begin{table}[b!]
\begin{center}
\caption{ {\bf Parameter setting of the coupling weights $\theta_{p,i}$, $\theta_{\sigma,i}$ and $\theta_{v,i}$ for all dyads, with $\eta_1=\eta_2=10^{-4}$} }\label{tab:parset}
\begin{tabular}{|c|c|c|c|c|}
    \hline
    \multirow{2}{*}{Dyad1} &  VP1 & $\theta_{p,1}=0.10$& $\theta_{\sigma,1}=0.30$& $\theta_{v,1}=0.60$  \\
    \cline{2-5}
    & VP2 & $\theta_{p,2}=0.10$& $\theta_{\sigma,2}=0.55$& $\theta_{v,2}=0.35$  \\
    \hline
    \multirow{2}{*}{Dyad2} &  VP1 & $\theta_{p,1}=0.10$& $\theta_{\sigma,1}=0.35$& $\theta_{v,1}=0.55$  \\ \cline{2-5}
    & VP2 & $\theta_{p,2}=0.12$& $\theta_{\sigma,2}=0.45$& $\theta_{v,2}=0.43$  \\
    \hline
    \multirow{2}{*}{Dyad3} &  VP1 & $\theta_{p,1}=0.15$& $\theta_{\sigma,1}=0.30$& $\theta_{v,1}=0.55$  \\ \cline{2-5}
    & VP2 & $\theta_{p,2}=0.10$& $\theta_{\sigma,2}=0.35$& $\theta_{v,2}=0.55$  \\
    \hline
    \multirow{2}{*}{Dyad4} &  VP1 & $\theta_{p,1}=0.31$& $\theta_{\sigma,1}=0.38$& $\theta_{v,1}=0.31$  \\ \cline{2-5}
    & VP2 & $\theta_{p,2}=0.31$& $\theta_{\sigma,2}=0.38$& $\theta_{v,2}=0.31$  \\
    \hline
    \multirow{2}{*}{Dyad5} &  VP1 & $\theta_{p,1}=0.72$& $\theta_{\sigma,1}=0.22$& $\theta_{v,1}=0.06$  \\ \cline{2-5}
    & VP2 & $\theta_{p,2}=0.72$& $\theta_{\sigma,2}=0.22$& $\theta_{v,2}=0.06$  \\
    \hline
    \multirow{2}{*}{Dyad6} &  VP1 & $\theta_{p,1}=0.10$& $\theta_{\sigma,1}=0.60$& $\theta_{v,1}=0.30$ \\
    \cline{2-5}
    & VP2 & $\theta_{p,2}=0.10$& $\theta_{\sigma,2}=0.28$& $\theta_{v,2}=0.62$  \\
    \hline
    \multirow{2}{*}{Dyad7} &  VP1 & $\theta_{p,1}=0.10$& $\theta_{\sigma,1}=0.30$& $\theta_{v,1}=0.60$  \\ \cline{2-5}
    & VP2 & $\theta_{p,2}=0.10$& $\theta_{\sigma,2}=0.35$& $\theta_{v,2}=0.55$  \\
    \hline
    \multirow{2}{*}{Dyad8} &  VP1 & $\theta_{p,1}=0.10$& $\theta_{\sigma,1}=0.28$& $\theta_{v,1}=0.62$  \\ \cline{2-5}
    & VP2 & $\theta_{p,2}=0.10$& $\theta_{\sigma,2}=0.30$& $\theta_{v,2}=0.60$  \\
    \hline
\end{tabular}
\end{center}
\end{table}

We here comment on the interpretation of the weights $\theta_{p,i}$, $\theta_{\sigma,i}$ and $\theta_{v,i}$, $i\in\{1,2\}$ for each dyad of VPs (Table~\ref{tab:parset}). These weights were tuned by trial-and-error according to the analysis of HP-HP interactions so that the VP dyads could achieve the desired matching results with the human benchmark.

Since the weights can be interpreted according to our theoretical model, they provide further insights onto the JI interaction between HPs. Table~\ref{tab:parset} shows that, in order for the model to replicate some characteristics of the human JI interaction in dyads $1$, $2$, $3$, $6$, $7$ and $8$, the weights had to belong to the following intervals:
\begin{itemize}
\item $\theta_{p,i}\in[0.10,0.15], i\in\{1,2\}$;
\item $\theta_{\sigma,i}\in[0.28,0.60], i\in\{1,2\}$;
\item $\theta_{v,i}\in[0.30,0.60], i\in\{1,2\}$.
\end{itemize}
This indicates that the corresponding HPs paid more attention to individual preferences (their IMSs) and mutual imitation than to position mismatches. On the other hand:
\begin{itemize}
\item $\theta_{p,4}=0.31$, $\theta_{\sigma,4}=0.38$, $\theta_{v,4}=0.31$ lead to the conclusion that the corresponding HPs in Dyad 4 balanced all the three weights;
\item $\theta_{p,5}=0.72$, $\theta_{\sigma,5}=0.22$, $\theta_{v,5}=0.06$ lead to the conclusion that the corresponding HPs in Dyad 5 paid more attention to position error than to individual preferences and mutual imitation during their interaction.
\end{itemize}

\subsubsection{Available data}
\label{sec:availabledata}

Data structures for the experiments of HP-HP, VP-VP and VP-HP interactions are given as follows.
\begin{itemize}
\item HP-HP data has a format of a Matlab structure with 8 dyads fields each containing two players field. Each player field contains field solo with 3x1000 matrix containing solo recordings and field JI with 3x1000 matrix containing JI data. Sampling rate of the data is 100Hz.
\item VP-VP data has a format of a Matlab structure with 8 dyads fields each containing two players field. Each player field contains 9 trial fields. Each trial field contains vector $t$ of time stamps, vector $x$ of the simulated positions and variable $sig$ with the number indicating signature of the HP used in the simulations. Sampling rate is not constant. In order to have uniform sampling rate, in the paper data is interpolated using mean time-step and shape-preserving piecewise cubic interpolation.
\item VP-HP data has a format of a Matlab structure containing 4 fields. SigHP with signature of the HP, SigVP with signature of the VP, JIposHP position of the human player and JIposVP position of the VP. Fields sigVP and sigHP contain vectors $t$ of time stamps and $v$ of corresponding solo velocities, while fields JIposVP and JIposHP contain vectors $t$ of time stamps and $x$ of the corresponding positions when the HP and VP were playing together.
\end{itemize}
Each field contains vector $t$ of time stamps. 

In particular, refer to Table \ref{tab:avdata} for an explanation on how the data was collected and what it is made up of, to Table \ref{tab:VPdyads} for more information on the composition of dyads of VPs, to Table \ref{tab:table3} for more information on the data structure of a HP dyad, and to Table \ref{tab:table4} for more information on the data structure of a VP dyad.

\begin{table}[b!]
\begin{center}
\caption{ {\bf Available data}}\label{tab:avdata}
\begin{tabular}{|p{1.5cm}|p{10cm}|}
\hline
 HP-HP & Data was collected using experimental Set-up 1. Data was recorded from 8 dyads (16 participants in total). Available data has a format of a Matlab structure with 8 dyads fields each containing two players field. Each player field contains: field solo with 3x1000 matrix containing solo recordings and field JI with 3x1000 matrix containing JI data. Each recording has length of 60sec. Sampling rate of the data is 100Hz.\\ \hline
 
VP-VP & Simulations of the interactions between virtual players were run on a desktop computer using Matlab. Available data has a format of a Matlab structure with 8 dyads fields each containing two players field. Each player field contains 9 trial fields. Each trial field contains vector $t$ of time stamps, vector $x$ of the simulated positions and variable $sig$ with the number indicating signature of the HP used in the simulations. Sampling rate is not constant due to changes in duration of simulation steps. In order to have uniform sampling rate, data in the paper is interpolated using mean time-step and shape-preserving piecewise cubic interpolation.\\ \hline

VP-HP & Data was collected using experimental Set-up 2. Data was recorded for a single human participant interacting with a virtual player driven by a single motor signature. Available data has a format of a Matlab structure containing 4 fields. SigHP with signature of the HP, SigVP with signature of the VP, JIposHP position of the human player and JIposVP position of the VP. Fields sigVP and sigHP contain vectors $t$ of time stamps and $v$ of corresponding solo velocities, while fields JIposVP and JIposHP contain vectors $t$ of time stamps and $x$ of the corresponding positions when the HP and VP were playing together.\\ \hline
\end{tabular}
\end{center}
\end{table}

\begin{table}[b!]
\begin{center}
\caption{ {\bf Composition of dyads of VPs}}\label{tab:VPdyads}
\begin{tabular}{|c|c|c|c|}
\hline
 Dyad & Trial & Sig. $VP_1$ & Sig. $VP_2$ \\ \hline \hline
 1--8 & 1 & 1 & 1 \\ \cline{2-4}
        &  2 &  1 & 2 \\ \cline{2-4}
        &  3 &  1 & 3 \\ \cline{2-4}
         & 4 &  2 & 1 \\ \cline{2-4}
        &  5 &  2 & 2 \\ \cline{2-4}
        &  6 &  2 & 3 \\ \cline{2-4}
       &   7 &  3 & 1 \\ \cline{2-4}
       &  8 &  3 & 2 \\ \cline{2-4}
       &  9 &  3 & 3 \\ 
 \hline
\end{tabular}
\end{center}
\end{table}

\begin{table}[b!]
\begin{center}
\caption{ {\bf Data structure of a HP dyad}}\label{tab:table3}
\begin{tabular}{|c|c|c|c|c|}
\hline
 Dyad & Player & Condition & \multicolumn{2}{l|}{Matrix 3x1000}\\ \hline \hline
 1--8 & 1 & Solo & 1 &$x1_{s1}$\\ \cline{4-5}
    &   &          & 2 &$x1_{s2}$\\ \cline{4-5}
    &   &          & 3 &$x1_{s3}$\\ \cline{3-5}
     &  & JI      & 1 &$x1_{JI1}$\\ \cline{4-5}
    &   &          & 2 &$x1_{JI2}$\\ \cline{4-5}
    &   &          & 3 &$x1_{JI3}$\\ \cline{2-5}
   & 2 & Solo & 1 & $x2_{s1}$\\ \cline{4-5}
    &   &          & 2 & $x2_{s2}$\\ \cline{4-5}
    &   &          & 3 & $x2_{s3}$\\ \cline{3-5}
     &  & JI      & 1 & $x2_{JI1}$\\ \cline{4-5}
    &   &          & 2 & $x2_{JI2}$\\ \cline{4-5}
    &   &          & 3 & $x2_{JI3}$\\ \hline
\end{tabular}
\end{center}
\end{table}

\begin{table}[b!]
\begin{center}
\caption{ {\bf Data structure of a VP dyad}}\label{tab:table4}
\begin{tabular}{|c|c|c|c|c|c|}
\hline
 Dyad & Player & Trial & \multicolumn{3}{l|}{Fields: Signature, t, x}\\ \hline \hline
 1--8 & 1 & 1 & 1 & t1& $x1_{JI1}$\\ \cline{3-6}
    &   &  2 & 1 & t2& $x1_{JI2}$\\ \cline{3-6}
    &   &  3 & 1 & t3& $x1_{JI3}$\\ \cline{3-6}
     &  &  4 & 2 & t4&$x1_{JI4}$\\ \cline{3-6}
    &   &  5 & 2 & t5& $x1_{JI5}$\\ \cline{3-6}
    &   &  6 & 2 & t6&$x1_{JI6}$\\ \cline{3-6}
   &    &  7 & 3 & t7& $x1_{JI7}$\\ \cline{3-6}
    &   &  8 & 3 & t8& $x1_{JI8}$\\ \cline{3-6}
    &   &  9 & 3 & t9& $x1_{JI9}$\\ \cline{2-6}
   & 2 & 1 & 1 & t1& $x2_{JI1}$\\ \cline{3-6}
    &   &  2 & 2 & t2&$x2_{JI2}$\\ \cline{3-6}
    &   &  3 & 3 & t3& $x2_{JI3}$\\ \cline{3-6}
     &  &  4 & 1 & t4&$x2_{JI4}$\\ \cline{3-6}
    &   &  5 & 2 & t5& $x2_{JI5}$\\ \cline{3-6}
    &   &  6 & 3 & t6&$x2_{JI6}$\\ \cline{3-6}
   &    &  7 & 1 & t7&$x2_{JI7}$\\ \cline{3-6}
    &   &  8 & 2 & t8& $x2_{JI8}$\\ \cline{3-6}
    &   &  9 & 3 & t9& $x2_{JI9}$\\ 
 \hline
\end{tabular}
\end{center}
\end{table}

\clearpage

\subsection{Appendix}
\label{S1_Appendix}
We present here the theoretical analysis of the mathematical model we propose. More specifically we show that the solution to the minimization problem guarantees boundedness of the position error between the players when either two coupled VPs are considered or when the model-driven avatar interacts with a human subject.

\subsubsection{Model-driven avatar}

Let us recall that, over each sampling period $T=t_{k+1}-t_k$, the control input $u$ is obtained by solving the optimal control problem:
\begin{equation}\label{minjAPP}
\min_{u\in R} J
\end{equation}
where
\begin{equation}\label{vcostAPP}
\begin{split}
J&=\frac{\theta_p}{2}\underbrace{(x(t_{k+1})-\hat{r}_p(t_{k+1}))^2}_{Position~ TC}+\frac{\theta_{\sigma}}{2}\int_{t_k}^{t_{k+1}}\underbrace{(\dot{x}(\tau)-\sigma(\tau))^2}_{Motor ~Signature}d\tau\\
&+\frac{\theta_v}{2}\int_{t_k}^{t_{k+1}}\underbrace{(\dot{x}(\tau)-\hat{r}_v(\tau))^2}_{Velocity~TC}d\tau
+\frac{\eta}{2}\int_{t_k}^{t_{k+1}}u(\tau)^2d\tau
\end{split}
\end{equation}
with $\theta_p,\theta_{\sigma},\theta_v,\eta>0$ being tunable control parameters satisfying the constraint $\theta_p+\theta_{\sigma}+\theta_v=1$. Here, $x$ and $\dot{x}$ refer to position and velocity of the VP whose dynamics is modelled by:

\begin{equation}
\ddot{x}+(\alpha \dot{x}^2+\beta x^2-\gamma)\dot{x}+\omega^2 x=u
\end{equation}
Moreover, $\sigma$ refers to the IMS of a given HP in solo trials, while $\hat{r}_p$ and $\hat{r}_v$ represent the estimated position and velocity of the HP the VP is interacting with, respectively. In particular:
\begin{equation*}\label{rvest}
\hat{r}_v(t)=\frac{r_p(t_k)-r_p(t_{k-1})}{T} \quad t\in[t_k, t_{k+1}]
\end{equation*}
and
\begin{equation*}\label{rpest}
\hat{r}_p(t)=r_p(t_k)+\hat{r}_v(t)(t-t_k), \quad t\in[t_k, t_{k+1}]
\end{equation*}

\begin{thm}
\label{thm1}
The solution to the optimization problem described in Eq~(\ref{minjAPP}) ensures bounded position error between VP and HP.
\end{thm}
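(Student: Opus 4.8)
The plan is to reduce the statement to a boundedness estimate for the virtual player's position $x$ alone, and then to exploit the intrinsic dissipativity of the HKB oscillator together with the regularising effect of the control-penalty term. First I would observe that the human's motion is physically confined to the bounded workspace of the leap-motion controller, so $r_p$, and hence the piecewise estimates $\hat{r}_p$ and $\hat{r}_v$, are uniformly bounded; the recorded motor signature $\sigma$ is a finite data record and is bounded as well. Since $|x-r_p|\le|x|+|r_p|$, it therefore suffices to prove that $x(t)$ stays bounded for all $t$.

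Next I would rewrite the HKB equation in state-space form with $x_1=x$, $x_2=\dot{x}$, and control the size of the optimiser's output on each sampling window. Because $u\equiv 0$ is an admissible control, optimality gives $\frac{\eta}{2}\int_{t_k}^{t_{k+1}}(u^{*})^2\,d\tau\le J(u^{*})\le J(0)$, and $J(0)$ is finite and controlled by the state at $t_k$ together with the bounded references. This caps the control effort per window and prevents the terminal position-correspondence term from forcing arbitrarily aggressive inputs; note, however, that what this yields directly is an $L^2$ bound on $u^{*}$ over each subinterval rather than a pointwise one.

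The core of the argument is an energy, or input-to-state, estimate for the forced oscillator. Taking $V=\tfrac12\dot{x}^2+\tfrac12\omega^2 x^2$ I would compute $\dot{V}=-(\alpha\dot{x}^2+\beta x^2-\gamma)\dot{x}^2+u\dot{x}$ and bound the forcing term so that the quartic dissipation $-\alpha\dot{x}^4$ dominates both the destabilising $\gamma\dot{x}^2$ and the input once the state leaves a large compact set. I expect the main obstacle to lie precisely here, on two counts: on the slice $\dot{x}\approx 0$ the energy is not strictly decreasing, so the estimate alone does not exclude growth of $x$; and the only a-priori control bound is in $L^2$, so the $u\dot{x}$ contribution must be handled by Cauchy--Schwarz against $\|\dot{x}\|_{L^2}$ rather than pointwise. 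I would close the first gap in the standard Liénard/Van der Pol manner, either by augmenting $V$ with a cross term $\epsilon x\dot{x}$ to obtain strict decrease of the modified energy outside a compact set, or by a LaSalle-type argument noting that the restoring force $-\omega^2 x$ makes $\{\dot{x}=0\}$ non-invariant away from the origin, so the trajectory cannot linger there with $x$ large. The outcome is uniform ultimate boundedness of $(x,\dot{x})$, with a bound depending only on the size of the admissible inputs, i.e.\ an ISS property of the HKB dynamics.

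Finally I would assemble the pieces: the per-window control bound of the second step feeds the ISS estimate of the third to give a uniform bound on $x$ across all windows, propagated through the sampling instants $t_k$ by continuity of the state. Combined with the boundedness of $r_p$, this produces a uniform bound on the position error $x-r_p$, which is the assertion of the theorem.
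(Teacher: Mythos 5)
Your opening move---comparing the optimal cost with the cost of the admissible control $u\equiv 0$---is exactly the paper's first step, but you then extract the wrong term from the resulting inequality. Since every summand of $J$ in Eq~(\ref{vcostAPP}) is nonnegative, the comparison $J^*\le J_0$ bounds not only the control-effort term but also the terminal position-TC term:
\begin{equation*}
\frac{\theta_p}{2}\left(x^*(t_{k+1})-\hat{r}_p(t_{k+1})\right)^2 \;\le\; J^* \;\le\; J_0,
\end{equation*}
and once $J_0$ is bounded---which the paper obtains from the existence of a limit cycle of the unforced HKB oscillator (Theorem 5.1 of~\cite{chao14}), so that $x$ and $\dot{x}$ along the zero-input trajectory are bounded, together with boundedness of $\hat{r}_p$, $\hat{r}_v$ and $\sigma$---the claimed bound on the position error falls out in one line, with no analysis of the controlled dynamics at all. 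You instead keep only the term $\frac{\eta}{2}\int(u^*)^2$ and set out to prove the much stronger statement that the closed-loop state itself is uniformly bounded, via an ISS/energy estimate for the forced HKB oscillator.

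That detour is where your argument develops a genuine gap, one you partially flag yourself. The per-window bound is $\|u^*\|_{L^2(t_k,t_{k+1})}^2\le 2J_0/\eta$, and $J_0$ depends on the state at $t_k$ (through the terminal position term and the two velocity-tracking integrals evaluated along the zero-input trajectory), so the ``size of the admissible inputs'' is state-dependent. Your ISS conclusion---uniform ultimate boundedness with a bound depending only on the size of the admissible inputs---therefore rests on a circular premise: a uniform input bound requires a uniform state bound, which is precisely what you are trying to prove. To close the loop you would need a quantitative bootstrap, e.g.\ an $R>0$ such that $|X(t_k)|\le R$ forces $|X(t_{k+1})|\le R$; since $J_0$ grows roughly quadratically in $|X(t_k)|$ while the input enters your energy balance through the large factor $\eta^{-1}$ (the paper uses $\eta=10^{-4}$), this gain inequality is not automatic from the quartic dissipation, and your Li\'enard cross-term device addresses only the degeneracy on the slice where $\dot{x}=0$, not this. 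In fairness, the paper's own assertion that $J_0$ is bounded quietly carries the same dependence on the state at $t_k$; but the paper needs only that per-window bound before exiting immediately through the terminal cost term, whereas your route additionally requires the unproven state-independent ISS gain---so the shared caveat is cosmetic for the paper and essential for you.
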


\begin{proof}
Let $J_0$ denote the value of the cost function described in Eq~(\ref{vcostAPP}) when $u\equiv0$. In addition, let $J^*$ and $x^*$ correspond to the optimal value of the cost function and the optimal position of the VP, respectively. It is clear that $J^*\leq J_0$ since $J^*$ is the minimum value of the cost function. According to Theorem $5.1$ in~\cite{chao14}, there exists a limit cycle in the HKB oscillator, and thus $x$ and $\dot{x}$ are bounded in $J_0$. Considering that $\hat{r}_p$, $\sigma$ and $\hat{r}_v$ are all bounded, we conclude that $J_0$ is bounded. It follows from the inequality
$$
\frac{\theta_p}{2}(x^*(t_{k+1})-\hat{r}_p(t_{k+1}))^2\leq J^* \leq J_0
$$
that the position error between VP and HP is bounded as well.  \hfill \end{proof}

\begin{cor}
\label{coro1}
If the nonlinear HKB dynamics of the VP end-effector is substituted with a simpler linear dynamics of the form
$$
\ddot{x}+a\dot{x}+bx=u
$$
achievement of the optimal solution to the minimization problem described in Eq~(\ref{minjAPP}) is guaranteed over each subinterval.
\end{cor}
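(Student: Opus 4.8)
The plan is to exploit the fact that, once the control is held constant over each sampling period and the dynamics are linear, the cost $J$ collapses to a univariate quadratic in the scalar $u$ whose strict convexity is forced by the positive control-effort weight $\eta$. First I would recast the linear model in state-space form $\dot{z}=Az+Bu$ with $z=(x,\dot{x})^{\top}$, $B=(0,1)^{\top}$, and $A$ the companion matrix of $\ddot{x}+a\dot{x}+bx$. By the variation-of-constants formula, the solution on $[t_k,t_{k+1}]$ with initial state $z(t_k)$ and constant input $u$ is
\begin{equation*}
z(t)=e^{A(t-t_k)}z(t_k)+u\int_{t_k}^{t}e^{A(t-s)}B\,ds,
\end{equation*}
so that both $x(t)$ and $\dot{x}(t)$ are affine functions of $u$. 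Writing $x(t)=p(t)+u\,q(t)$ and $\dot{x}(t)=\dot{p}(t)+u\,\dot{q}(t)$, the functions $p$ and $q$ (the zero-input response and the unit-step zero-state response) are determined by $z(t_k)$, $a$ and $b$ and are independent of $u$. This affine dependence is precisely the structural feature the nonlinear HKB model does not possess, and it is the step on which everything else rests.

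Next I would substitute these expressions into Eq~(\ref{vcostAPP}). The position-TC term becomes the square of the affine quantity $\bigl(p(t_{k+1})-\hat{r}_p(t_{k+1})\bigr)+u\,q(t_{k+1})$, the two integral terms become integrals of squares of affine functions of $u$, and the control-effort term reduces to $\tfrac{\eta}{2}\int_{t_k}^{t_{k+1}}u^2\,d\tau=\tfrac{\eta T}{2}u^2$. Collecting powers of $u$ then shows that $J(u)=\kappa_2 u^2+\kappa_1 u+\kappa_0$ with leading coefficient
\begin{equation*}
\kappa_2=\frac{\theta_p}{2}\,q(t_{k+1})^2+\frac{\theta_\sigma+\theta_v}{2}\int_{t_k}^{t_{k+1}}\dot{q}(\tau)^2\,d\tau+\frac{\eta T}{2}.
\end{equation*}
Since $\eta>0$ and $T>0$ while the remaining contributions are nonnegative, we obtain $\kappa_2\ge \tfrac{\eta T}{2}>0$.

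The conclusion then follows from elementary convex analysis: a univariate quadratic with strictly positive leading coefficient is strictly convex and coercive on $\mathbb{R}$, hence attains a unique global minimizer, given in closed form by $u^{*}=-\kappa_1/(2\kappa_2)$ via the first-order condition $dJ/du=0$. Thus on every subinterval the optimal solution exists, is unique, and is explicitly computable, which is exactly the assertion of the corollary.

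I expect the main obstacle to be conceptual rather than computational, namely articulating why the guarantee holds here but not for the HKB oscillator. The decisive point is that linearity makes the map $u\mapsto(x,\dot{x})$ affine, so $J$ is convex quadratic and first-order stationarity certifies the global optimum; for the nonlinear HKB dynamics this map is nonlinear, $J$ need not be convex, and a stationary point need not be a global minimizer. The one quantity I would check with care is the sign of $\kappa_2$: it is the positivity of $\eta$ that guarantees $\kappa_2>0$ and hence a genuine minimum rather than a saddle or an unbounded-below problem. I would also note that continuity of $\sigma$, $\hat{r}_p$, $\hat{r}_v$, $q$ and $\dot{q}$ on the finite interval is all that is needed to make the integrals defining $\kappa_1$ and $\kappa_0$ finite.
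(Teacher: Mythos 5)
Your central mechanism is the right one, but as written the argument optimizes over the wrong feasible set. You begin by asserting ``the control is held constant over each sampling period,'' and everything downstream depends on this: the reduction of the effort term to $\tfrac{\eta}{2}\int_{t_k}^{t_{k+1}}u^2\,d\tau=\tfrac{\eta T}{2}u^2$ and the collapse of $J$ to a univariate quadratic $\kappa_2u^2+\kappa_1u+\kappa_0$ are valid only on the subclass of piecewise-constant inputs. The paper's problem in Eq~(\ref{minjAPP})--(\ref{vcostAPP}), however, is posed over control \emph{functions} on $[t_k,t_{k+1}]$: the cost integrand is written $u(\tau)^2$, and the paper's own treatment proceeds by the calculus of variations in the Hamiltonian formalism, with function-valued variations $\delta u(t)$ and (in the coupled-VP analysis) a genuinely time-varying extremal $u^*(t)=-\eta^{-1}\lambda_2(t)$ obtained from the minimum principle. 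The notation $\min_{u\in R}$ admittedly invites your reading, but taken at face value your proof establishes existence and uniqueness of the best \emph{constant} input on each subinterval, which is a different and weaker statement than achievement of the optimum of the stated problem.

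The repair is short, because your key structural observation survives the lift to the correct space. For $u\in L^2(t_k,t_{k+1})$, variation of constants still gives $z(t)=e^{A(t-t_k)}z(t_k)+\int_{t_k}^{t}e^{A(t-s)}Bu(s)\,ds$, so the map $u(\cdot)\mapsto(x,\dot{x})$ is affine; hence $J$ is a quadratic functional on $L^2$, strictly convex and coercive thanks to the term $\tfrac{\eta}{2}\|u\|_{L^2}^2$ with $\eta>0$, and it therefore possesses a unique global minimizer, which must coincide with the Pontryagin extremal. This is, in substance, the same fact the paper proves by a different route: it computes the second variation, $\delta^2J=\theta_p[\delta x(t_{k+1})]^2+\int_{t_k}^{t_{k+1}}\bigl((\theta_{\sigma}+\theta_v)(\delta\dot{x})^2+\eta(\delta u)^2\bigr)\,dt\geq0$, vanishing only for the trivial variation; since for linear dynamics the expansion of $J$ terminates at second order, nonnegativity of $\delta^2J$ is global convexity in disguise, exactly your $\kappa_2>0$ in function-space form. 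Your conceptual diagnosis of why the guarantee fails for the HKB oscillator (the solution map ceases to be affine, so stationarity no longer certifies a global minimum) is correct and matches the point of the corollary; with the feasible set corrected, your proof would be complete and arguably more self-contained than the paper's appeal to the second-variation formula of its cited optimal-control text.
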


\begin{proof}
According to the fundamental theorem of the calculus of variations, we need to examine the second variation of the given cost function in order to establish the optimum. From existing conclusions in~\cite{nai02}, the second variation of the cost function described in Eq~(\ref{vcostAPP}) in the Hamiltonian formalism is given by
\begin{equation*}
\begin{split}
\delta^2J&=\theta_p[\delta x(t_{k+1})]^2\\
&+\int_{t_k}^{t_{k+1}}\left(
                                  \begin{array}{cc}
                                    \delta X & \delta u \\
                                  \end{array}
                                \right)\left(
                                         \begin{array}{cc}
                                           H_{XX} & H_{Xu} \\
                                           H^T_{uX} & H_{uu} \\
                                         \end{array}
                                       \right)\left(
                                                \begin{array}{c}
                                                  \delta X \\
                                                  \delta u \\
                                                \end{array}
                                              \right)dt
\end{split}
\end{equation*}
where $\delta X=[\delta x~\delta\dot{x}]^T$ and $H$ is the Hamiltonian
\begin{equation*}
\begin{split}
H(X,u,\lambda)&=\frac{1}{2}\theta_{\sigma}(\dot{x}-\sigma)^2+\frac{1}{2}\theta_v(\dot{x}-\hat{r}_v)^2\\
&+\frac{1}{2}\eta u^2+\lambda^T\left(
          \begin{array}{c}
            \dot{x}\\
            -ay-bx+u \\
          \end{array}
        \right)
\end{split}
\end{equation*}
with $X=[x~\dot{x}]^T$ and $\lambda=[\lambda_1~\lambda_2]^T$.
Rewriting the linear system in matrix form we obtain
$$
\dot{X}=AX+Bu
$$
where
$$
A=\left(
    \begin{array}{cc}
      0 & 1 \\
      -b & -a \\
    \end{array}
  \right)
, \quad B=\left(
                         \begin{array}{c}
                           0 \\
                           1 \\
                         \end{array}
                       \right)
$$
Let $X=X^*+\delta X$ and $u=u^*+\delta u$, where $X^*$ and $u^*$ denote the optimal state and optimal control, respectively. Since $\dot{X}^*=AX^*+Bu^*$, we get
\begin{equation}\label{delX}
\delta \dot{X}=A \delta X+B\delta u
\end{equation}
Thus, it follows from $H_{Xu}=H_{uX}=[0~0]^T$, $H_{uu}=\eta>0$ and
$$
H_{XX}=\left(
         \begin{array}{cc}
           0 & 0 \\
           0 & \theta_{\sigma}+\theta_v \\
         \end{array}
       \right) \geq 0
$$
that
\begin{equation*}
\begin{split}
\delta^2J&=\theta_p[\delta x(t_{k+1})]^2\\
&+\int_{t_k}^{t_{k+1}}\delta X(t)^TH_{XX}\delta X(t)+\eta(\delta u(t))^2dt\\
&=\theta_p[\delta x(t_{k+1})]^2\\
&+\int_{t_k}^{t_{k+1}}(\theta_{\sigma}+\theta_v)(\delta \dot{x}(t))^2+\eta(\delta u(t))^2dt\\
&\geq0
\end{split}
\end{equation*}
Moreover, $\delta^2J=0$ is equivalent to $\delta x(t_{k+1})=0$, $\delta \dot{x}(t)=0$ and $\delta u(t)=0$ for all $t\in [t_k,t_{k+1}]$, which yields $\delta x(t)=\delta x(t_k)=0$ from Eq~(\ref{delX}). This corresponds to the optimal solution $X^*$ and the optimal control $u^*$. Therefore, we conclude that the optimal control ensures achievement of the minimum value of the cost function described in Eq~(\ref{vcostAPP}) over each sampling period.
\hfill
\end{proof}

\subsubsection{Two coupled VPs}
Let us recall that the model of two interacting VPs we propose consists of two coupled HKB oscillators:
\begin{equation}\label{couple_eq}
\left\{
  \begin{array}{ll}
   \ddot{x}_1+(\alpha_1 \dot{x}_1^2+\beta_1 x_1^2-\gamma_1)\dot{x}_1+\omega_1^2x_1=u_1 \\
   \ddot{x}_2+(\alpha_2 \dot{x}_2^2+\beta_2 x_2^2-\gamma_2)\dot{x}_2+\omega_2^2x_2=u_2
  \end{array}
\right.
\end{equation}
where $x_1$ and $x_2$ refer to the positions of the two virtual players VP$_1$ and VP$_2$, respectively. Analogously to the previous case, the control input for each HKB oscillator can be derived by making each VP solve the following optimal control problem
\begin{equation*}
\min_{u_i\in R} J_i, \quad i\in \{1,2\}
\end{equation*}
where
\begin{equation*}
\begin{split}
J_1&=\frac{\theta_{p,1}}{2}(x_1(t_{k+1})-{x}_2(t_{k+1}))^2\\
&+\frac{\theta_{\sigma,1}}{2}\int_{t_k}^{t_{k+1}}(\dot{x}_1(\tau)-\sigma_1(\tau))^2d\tau\\
&+\frac{\theta_{v,1}}{2}\int_{t_k}^{t_{k+1}}(\dot{x}_1(\tau)-\dot{x}_2(\tau))^2d\tau+\frac{\eta_1}{2}\int_{t_k}^{t_{k+1}}u_1(\tau)^2d\tau
\end{split}
\end{equation*}
and
\begin{equation*}
\begin{split}
J_2&=\frac{\theta_{p,2}}{2}(x_2(t_{k+1})-{x}_1(t_{k+1}))^2\\
&+\frac{\theta_{\sigma,2}}{2}\int_{t_k}^{t_{k+1}}(\dot{x}_2(\tau)-\sigma_2(\tau))^2d\tau\\
&+\frac{\theta_{v,2}}{2}\int_{t_k}^{t_{k+1}}(\dot{x}_2(\tau)-\dot{x}_1(\tau))^2d\tau+\frac{\eta_2}{2}\int_{t_k}^{t_{k+1}}u_2(\tau)^2d\tau
\end{split}
\end{equation*}
with $\theta_{p,i},\theta_{\sigma,i},\theta_{v,i},\eta_i>0, i \in \{1,2\}$ being tunable parameters satisfying the constraints $\theta_{p,i}+\theta_{\sigma,i}+\theta_{v,i}=1$.
%Once again, $\hat{x}_1$ and $\hat{x}_2$ are calculated according to the linear estimator
%$$
%\hat{x}_i(t)=x_i(t_k)+\dot{x}_i(t_k)(t-t_k), \quad t\in[t_k, t_{k+1}] \quad \mathrm{and} \quad i \in\{1,2\}
%$$

In order to perform theoretical analysis for the nonlinearly coupled system described in Eq~(\ref{couple_eq}), we formulate the Hamiltonian for each of the two previous optimization problems as follows
\begin{equation*}
\begin{split}
H(X_i,u_i,\lambda_i)&=\frac{1}{2}\theta_{\sigma,i}(\dot{x}_i-\sigma_i)^2+\frac{1}{2}\theta_{v,i}(\dot{x}_1-\dot{x}_2)^2+\frac{1}{2}\eta_i u_i^2\\
&~~~+\lambda_i^T\left(
          \begin{array}{c}
            \dot{x}_i \\
            -(\alpha_i x_i^2+\beta_i \dot{x}_i^2-\gamma_i)\dot{x}_i-\omega_i^2x_i+u_i \\
          \end{array}
        \right)
\end{split}
\end{equation*}
where $X_i=[x_i~\dot{x}_i]^T$ and $\lambda_i=[\lambda_{i1}~\lambda_{i2}]^T$, $i\in\{1,2\}$. Applying the minimum principle~\cite{nai02}, we get the optimal open loop control inputs given by
$$
u_i=argmin_{u_i\in R}H(X_i,u_i,\lambda_i)=-\eta_i^{-1}\lambda_i^{T}\left(
                                                              \begin{array}{c}
                                                                0 \\
                                                                1 \\
                                                              \end{array}
                                                            \right)
=-\eta_i^{-1}\lambda_{i2}
$$
and the corresponding optimal state equations
\begin{equation}\label{x_i}
\begin{split}
\dot{X}_i&=\nabla_{\lambda_i}H(X_i,u_i,\lambda_i)\\
         &=\left(
          \begin{array}{c}
            \dot{x}_i \\
            -(\alpha_i x_i^{2}+\beta_i \dot{x}_i^{2}-\gamma_i)\dot{x}_i-\omega_i^2x_i-\eta_i^{-1}\lambda_{i2}\\
          \end{array}
           \right)
\end{split}
\end{equation}
with initial condition $X_i(t_k)=[x_i(t_k)~\dot{x}_i(t_k)]^T$. Also, the optimal costate equations can be written as
\begin{equation}\label{lam_1}
\begin{split}
\dot{\lambda}_1&=-\nabla_{X_1} H(X_1,u_1,\lambda_1)\\
\end{split}
\end{equation}
or equivalently as
$$
\left\{
  \begin{array}{ll}
    \dot{\lambda}_{11}&=\lambda_{12}(2\alpha_1 x_1\dot{x}_1+\omega_1^2) \\
    \dot{\lambda}_{12}&=\lambda_{12}(\alpha_1 x_1^{2}+3\beta_1 \dot{x}_1^{2}-\gamma_1)-\lambda_{11}-\theta_{\sigma,1}(\dot{x}_1-\sigma_1)\\
                      &-\theta_{v,1}(\dot{x}_1-\dot{x}_2) \\
  \end{array}
\right.
$$
and
\begin{equation}\label{lam_2}
\begin{split}
\dot{\lambda}_2&=-\nabla_{X_2} H(X_2,u_2,\lambda_2)
\end{split}
\end{equation}
or equivalently
$$
\left\{
  \begin{array}{ll}
    \dot{\lambda}_{21}&=\lambda_{22}(2\alpha_2 x_2\dot{x}_2+\omega_2^2)\\
    \dot{\lambda}_{22}&=\lambda_{22}(\alpha_2 x_2^{2}+3\beta_2 \dot{x}_2^{2}-\gamma_2)-\lambda_{21}-\theta_{\sigma,2}(\dot{x}_2-\sigma_2)\\ &-\theta_{v,2}(\dot{x}_2-\dot{x}_1)\\
  \end{array}
\right.
$$
with terminal conditions
$$
\lambda_1(t_{k+1})=\left(
                   \begin{array}{c}
                     \theta_{p,1}(x_1(t_{k+1})-{x}_2(t_{k+1})) \\
                     0 \\
                   \end{array}
                 \right)
$$
and
$$
\lambda_2(t_{k+1})=\left(
                   \begin{array}{c}
                     \theta_{p,2}(x_2(t_{k+1})-{x}_1(t_{k+1})) \\
                     0 \\
                   \end{array}
                 \right)
$$
Hence, the solution of the coupled model can be obtained by solving the above boundary value problem described by Eq~(\ref{x_i}), Eq~(\ref{lam_1}) and Eq~(\ref{lam_2}). Following the same proof strategy as in Theorem~\ref{thm1}, it is guaranteed that the position error between two VPs is bounded, and the proof is thus omitted to avoid redundancies. In particular, the emergence of joint improvisation movement is available by properly tuning the parameters $\theta_{p,i}$, $\theta_{\sigma,i}$ and $\theta_{v,i}$.

\section*{Acknowledgments}

This work was funded by the European Project AlterEgo FP7 ICT 2.9 - Cognitive Sciences and Robotics, Grant Number 600610. The authors wish to thank Prof. Benoit Bardy, Dr. Ludovic Marin and Dr. Robin Salesse at EUROMOV, University of Montpellier, France for all the insightful discussions and Ed Rooke at University of Bristol for collecting some of the experimental data that was used to validate the approach presented in this paper.

\section*{Author Contributions}

Conceived and designed the experiments: CZ, FA, PS, KTA, MdB. Performed the experiments: CZ, FA. Analyzed the data: CZ, FA, PS. Contributed reagents/materials/analysis tools: CZ, FA, PS. Wrote the paper: CZ, FA, PS, KTA, MdB.

%\nolinenumbers
%{
%\color{red}
%Remember to remove all the figures (basically delete only the "includegraphics" line and add [h] next to "begin{figure}").
%}
%
%{
%\color{red}
%For the sake of simplicity you can use bibtex first. Then, before submission, just copy and paste the content of the file .bbl instead of using mybib.
%}

%\bibliography{mybib.bib}

\end{document}